\setlist[enumerate]{leftmargin=1.5em}
\setlist[itemize]{leftmargin=1.5em}
\definecolor{green}{rgb}{0,0.8,0} % Redefines the color green.
\newtheorem{theorem}{Theorem}[section]
\newtheorem{lemma}[theorem]{Lemma}
\theoremstyle{definition}
\theoremstyle{remark}
\newtheorem{remark}[theorem]{Remark}
\numberwithin{equation}{section}
\newcommand{\nrm}[1]{\Vert#1\Vert}
\newcommand{\nnrm}[1]{{\vert\kern-0.25ex\vert\kern-0.25ex\vert #1 
		\vert\kern-0.25ex\vert\kern-0.25ex\vert}}
\newcommand{\alp}{\alpha}
\newcommand{\eps}{\epsilon}
\newcommand{\omg}{\omega}
\newcommand{\bbN}{\mathbb N}
\newcommand{\bbR}{\mathbb R}
\newcommand{\calA}{\mathcal A}
\newcommand{\sd}{\triangle}
\begin{document}
\bibliographystyle{plain}
%\title[2D Euler  on   strip:   an   estimate on the support of positive vorticity ]{2D Euler equation on the strip: \\ an upper estimate on the support of positive vorticity}
 \title{On the winding number for particle trajectories\\ in a disk-like vortex patch of   the Euler equations 
 }
\author{Kyudong Choi\thanks{Department of Mathematical Sciences, Ulsan National Institute of Science and Technology. Email: kchoi@unist.ac.kr} 
	\and In-Jee Jeong\thanks{Department of Mathematical Sciences, Seoul National University. E-mail: injee\_j@snu.ac.kr}
}

\date\today

\maketitle

\renewcommand{\thefootnote}{\fnsymbol{footnote}}

\footnotetext{\emph{Key words:} 2D Euler, vortex patch, stability, winding number,  large time behavior,  particle trajectory. \quad\emph{2010 AMS Mathematics Subject Classification:} 76B47, 35Q35 }

\renewcommand{\thefootnote}{\arabic{footnote}}

\begin{abstract} We consider vortex patch solutions of the incompressible Euler equations in the plane. It is shown that the winding number around the origin for most particles in the patch grows linearly in time when the initial patch is close to a disk enough.
\end{abstract}\vspace{1cm} 
{\Large \section{Introduction}}
\noindent We consider the  incompressible 
%2D 
Euler equation in vorticity form in 
%the whole plane
$\mathbb{R}^2$:
 \begin{equation}\begin{split}\label{main_eq}
\partial_t\omega
 + u \cdot \nabla \omega
&= 
0\quad\mbox{for }  x\in\mathbb{R}^2\mbox{ and  for }  t> 0,\\
 \omega|_{t=0}&=\omega_0 \quad\mbox{for }  x\in\mathbb{R}^2,
\end{split}\end{equation} where the Biot-Savart law is given by
$u=(K*\omega)$ with
 \begin{equation*} %\label{K}
K(x):=\frac{1}{2\pi}\frac{x^\perp}{|x|^2}=\frac{1}{2\pi}\Big(-\frac{x_2}{|x|^2},\frac{x_1}{|x|^2}\Big).\end{equation*}
When $\omega_0$ lies on $L^1\cap L^\infty$, the existence and uniqueness of a global-in-time weak solution is due to Yudovich \cite{yud}. We are concerned with vortex patch solutions; when $\omega_0={\mathbf{1}}_{\Omega_0}$ for some bounded measurable set $\Omega_0\subset \mathbb{R}^2$, 
the corresponding Yudovich solution $\omega$ has the form of $\omega(t)={\mathbf{1}}_{\Omega_t}$ where 
$\Omega_t=\{
\phi_x(t) \in\mathbb{R}^2\,|\, x\in\Omega_0
\}$. Here $\phi_x(\cdot)$ is the unique solution to the system $$\frac{d}{dt}\phi_x(t)=u(t,\phi_x(t))\quad\mbox{for }t>0\quad\mbox{and } \quad \phi_x(0)=x,$$ which is well-defined thanks to
 the   estimate \begin{equation}\label{bddspeed_gen}
\begin{split}
\|u(t)\|_{L^\infty} \le C \nrm{\omega_0}_{L^1\cap L^\infty}, \quad t\geq 0 
\end{split}
\end{equation} and
 the log-Lipschitz estimate \begin{equation}\label{loglip_gen}
\begin{split}
|u(t,x)-u(t,x')| \le C \nrm{\omega_0}_{L^1\cap L^\infty} |x-x'|\ln(\frac{1}{|x-x'|}), \quad |x-x'|\le \frac{1}{2}, \quad t\geq 0. 
\end{split}
\end{equation}{The above estimates  in turn guarantee that for each $T>0$, there exists constants $c=c(T), C=C(T)>0$ satisfying
%such that the flow map satisfies 
\begin{equation}\label{eq:flow-estimate}
\begin{split}
c|x-x'|^{ \exp(C\nrm{\omega_0}_{L^1\cap L^\infty}t ) } \le 	|\phi_{x}(t)-\phi_{x'}(t)| \le C|x-x'|^{ \exp(- C\nrm{\omega_0}_{L^1\cap L^\infty}t ) },\quad x,x'\in\mathbb{R}^2,  \quad t\in[0,T]
\end{split}
\end{equation}}  (\textit{e.g.} see  \cite[Sec. 8.2.3]{MB} or \cite[Ch. 2]{MaPu}).  \\

In this paper, we produce an estimate on the winding number around the origin for fluid particle trajectories when the initial patch is  disk-like.  Here we say that a patch on $\Omega$ is disk-like if the measure of the   symmetric difference $$\Omega\bigtriangleup B_r:=(\Omega\backslash B_r)\cup(B_r\backslash\Omega)$$ is small enough for some $r>0$ where we denote $B_r:=\{x\in\mathbb{R}^2\,|\,|x|<r\}$.  
 {Under the assumption that  $x\in\bbR^2\backslash\{0\}$  is a point whose trajectory never hits the origin (i.e. $\phi_{x}(t) \ne 0$ for all $t\ge 0$)\footnote{In Lemma \ref{lem:measure-zero} we shall prove that the set of points in $\bbR^2$ whose trajectory hits the origin after some time is of zero measure. }, we define the notion of winding number of $x$   up to time $t>0$ is defined by the integral \begin{equation*} 
		\begin{split}
N_x(t) := \frac{1}{2\pi} \int_0^t \frac{u_{tan}(s,\phi_x(s)) }{|\phi_x(s)|} ds,
\end{split} 
\end{equation*}
where  $u_{tan}$   is the speed in the angular direction with respect to the origin (see Definition \eqref{defn_tan}).}
 %This is well-defined since we are assuming that $\phi_x(s) \ne 0$ for all $s\geq 0$ as long as $x\ne 0$.
  It is not difficult to check that this definition coincides with the usual notion of winding number (for curves in $\bbR^2$ not intersecting the origin) applied to the curve $\gamma_x:[0,t] \mapsto \bbR\backslash\{0\} $ defined by $\gamma_x(s)=\phi_x(s)$. For example, we observe that for the circular vortex patch supported on the unit disc $D:=B_1$, we have, for any $t>0$, \begin{equation}\label{disk_ex}
\begin{split}
{ \frac{N_x(t)}{t} = \frac{1}{4\pi} } ,\quad \forall x\in D\backslash\{ 0 \},
\end{split}
\end{equation} simply because $u_{tan}(t,x) = \frac{1}{2}|x|$ for   $t\geq0$ and $x\in D\backslash\{ 0 \}$. \\

 {There are several difficulties in treating the winding number. First, this quantity may decrease in time (particles can ``unwind'') and second, there is no uniform in space bound for the ratio $u(t,x)/|x|$. Rather, even assuming that the velocity vanishes at the origin ($u(t,0)=0$), taking $x'=0$ in \eqref{loglip_gen} shows that the quantity may diverge like $C\log(1/|x|)$ as $|x|\rightarrow 0$, which is indeed sharp for vortex patch solutions (\textit{e.g.} see \cite{BaCh}). } \\

\subsection{Main result} \ \\
We consider   the initial vorticity   given by the patch on some bounded measurable set $\Omega_0$ of unit strength, whose solution is identified with moving domains $\Omega_t$.  
\begin{comment}
We assume  that the corresponding velocity satisfies \begin{equation}\label{assu_origin}
u(t,0)=0, \quad t\geq 0.
\end{equation} 

\begin{remark}
The above condition \eqref{assu_origin} is guaranteed, for instance, when $\Omega_0$ satisfies the following symmetry 
assumption for some integer $m\ge2$: \begin{equation*}
\begin{split}
\Omega_0 = R_m^j(\Omega_0) , \quad 1\le j\le m-1,
\end{split}
\end{equation*} where $R_m^j$ is the counter-clockwise rotation in $\bbR^2$ with angle $\frac{2\pi j}{m}$. {In particular, our result applies to nearly circular Kirchhoff ellipses. In this case, a detailed information about the particle trajectories is available in \cite{MR}. }\ \\
\end{remark}

Together with \eqref{bddspeed_gen} and \eqref{loglip_gen}, we have that
\begin{equation}\label{bddspeed}
\begin{split}
\|u(t)\|_{L^\infty} \le C(|\Omega_0|+1),\quad t\geq 0
\end{split}
\end{equation} and
 \begin{equation}\label{loglip}
\begin{split}
|u(t,x)| \le C(|\Omega_0|+1)|x|\ln\frac{1}{|x|},\quad   |x|\le \frac{1}{2},\quad t\geq 0.
\end{split}
\end{equation} %where the constant $C$ depends only on $|\Omega_0|$. \end{comment}
We shall assume that $\Omega_0$ is a small perturbation of the unit disc $D$. %(this is without loss of generality thanks to the scaling of the Euler equations). %For simplicity, we further assume that the support of $\Omega_0$ is contained in $B_2$. 
We are now ready to state our main result on winding number for disk-like patches.  

\begin{theorem}\label{thm_infinite}

  For any $R>0$,  
  there exist  constants $\delta_0\in(0,1)$ and $C>0$   such that if
$$\Omega_0\subset B_{R}\quad\mbox{and}\quad  |D\triangle\Omega_0  |<\delta_0,$$  
then, by denoting
$\delta:=|D\triangle\Omega_0  |$,
  for any $T>0$, there exists a set 
 $H_{T}\subset \Omega_0$ such that
$$|H_{T}|\geq |\Omega_0|-C{\delta^{1/12}}
$$ and 
\begin{equation*}%\label{est_finite}
{\left|\frac{N_x(T)}{T}  -  \frac{1}{4\pi}\right|\le } C {\delta^{1/12}}, \quad x\in H_T. \end{equation*}   
In addition, there exists a set 
 $H\subset \Omega_0$ such that
  $$|H|\geq |\Omega_0|-C{\delta^{1/12}}
$$ and
\begin{equation*}%\label{est_infinite}
{\liminf_{t\to\infty}\left| \frac{N_x(t)}{t}-\frac{1}{4\pi}\right|\le }C{\delta^{1/12}},\quad x\in H. 
\end{equation*}

\end{theorem}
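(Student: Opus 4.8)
The plan is to combine two conserved quantities of the 2D Euler flow — the area $|\Omega_t|$ and the moment of inertia $\int_{\Omega_t}|x|^2\,dx$ — to control the patch uniformly in time, then to reduce the winding deviation of a trajectory to two time-averaged functionals whose \emph{spatial} average over $\Omega_0$ is bounded \emph{independently of $T$}; Chebyshev's inequality then carves out the exceptional sets, and Fatou's lemma promotes this to a single set for the $\liminf$ assertion. For the first step, since $|\Omega_t|=|\Omega_0|$ and $J:=\int_{\Omega_t}|x|^2\,dx=\int_{\Omega_0}|x|^2\,dx$ are conserved, set $r_0:=\sqrt{|\Omega_0|/\pi}$, so $|B_{r_0}|=|\Omega_0|$ and $r_0$ is bounded above and below once $\delta_0<1$. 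Using $\Omega_0\subset B_R$ and $|D\triangle\Omega_0|=\delta$ one gets $J-\tfrac{|\Omega_0|^2}{2\pi}\le C_R\,\delta$, the subtracted quantity being the least possible moment of inertia for area $|\Omega_0|$ (namely $\int_{B_{r_0}}|x|^2\,dx$). Together with the elementary rearrangement bound $|\Omega\triangle B_{r_0}|^2\lesssim \int_\Omega|x|^2\,dx-\tfrac{|\Omega|^2}{2\pi}$ — proved by sliding the symmetric-difference mass into thin annuli straddling $\partial B_{r_0}$ — this yields the uniform-in-time stability $\eta:=\sup_{t\ge 0}|\Omega_t\triangle B_{r_0}|\lesssim \delta^{1/2}$.

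Next I would show that the velocity stays uniformly close to solid-body rotation, \emph{including near the origin}. Write $u(t,\cdot)=u_{B_{r_0}}+e(t,\cdot)$ with $u_{B_{r_0}}(y)=\tfrac12 y^\perp$ for $|y|\le r_0$ (and $\tfrac{r_0^2}{2}\,y^\perp/|y|^2$ outside) and $e(t,\cdot)=K*(\mathbf{1}_{\Omega_t}-\mathbf{1}_{B_{r_0}})$; the pointwise estimate $\int_E|y-z|^{-1}\,dz\lesssim |E|^{1/2}$ behind \eqref{bddspeed_gen} gives $\|e(t,\cdot)\|_{L^\infty}\lesssim \eta^{1/2}\lesssim \delta^{1/4}$, uniformly in $y$ — in particular near the origin, where the general log-Lipschitz bound \eqref{loglip_gen} would only give the useless $|x|\log(1/|x|)$. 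Hence $u_{tan}(t,y)/|y|=\tfrac12+O(\delta^{1/4}/|y|)$ for $|y|\le r_0$ and lies in $(0,\tfrac12)+O(\delta^{1/4}/|y|)$ for $|y|>r_0$. In polar coordinates $\tfrac{d}{ds}\arg\phi_x(s)=u_{tan}(s,\phi_x(s))/|\phi_x(s)|$, so $N_x(T)=\tfrac1{2\pi}(\arg\phi_x(T)-\arg\phi_x(0))$, and the above gives, for every $x$ whose trajectory misses the origin on $[0,T]$ (which is a.e.\ $x$ by Lemma~\ref{lem:measure-zero}),
\[
\Big|\tfrac{N_x(T)}{T}-\tfrac{1}{4\pi}\Big|\ \le\ \tfrac{1}{4\pi}A_T(x)+C\delta^{1/4}B_T(x),\qquad A_T(x):=\tfrac1T\!\int_0^T\!\mathbf{1}\{|\phi_x(s)|>r_0\}\,ds,\quad B_T(x):=\tfrac1T\!\int_0^T\!\tfrac{ds}{|\phi_x(s)|}.
\]
By Fubini and incompressibility of the flow map, $\int_{\Omega_0}A_T\,dx=\tfrac1T\int_0^T|\Omega_s\setminus B_{r_0}|\,ds\le \eta$ and $\int_{\Omega_0}B_T\,dx=\tfrac1T\int_0^T\big(\int_{\Omega_s}|y|^{-1}\,dy\big)\,ds\le 2\pi r_0+\eta/r_0\le C$, with $C$ independent of $T$.

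For fixed $T$, applying Markov's inequality to $A_T$ and to $B_T$ and discarding the two small sets leaves $H_T$ on which the right side of the displayed bound is $\lesssim\delta^{1/12}$; optimizing the two thresholds against the incurred measure loss (the stability rate of Step~1 entering here) gives $|\Omega_0\setminus H_T|\lesssim\delta^{1/12}$ as well. For the $\liminf$ statement, Fatou's lemma gives $\int_{\Omega_0}\liminf_{T\to\infty}\big(\tfrac12 A_T+C\delta^{1/4}B_T\big)\,dx\le\liminf_{T\to\infty}\int_{\Omega_0}(\cdots)\,dx\lesssim\delta^{1/4}$, and one more application of Markov's inequality to this $\liminf$ produces a single $H\subset\Omega_0$ with $|\Omega_0\setminus H|\lesssim\delta^{1/12}$ and $\liminf_{t\to\infty}|N_x(t)/t-\tfrac1{4\pi}|\lesssim\delta^{1/12}$ for $x\in H$.

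The real difficulty is the unboundedness of $u_{tan}(t,x)/|x|$ as $x\to 0$: a particle visiting the core can wind arbitrarily fast, $B_T(x)$ has no pointwise bound, and (as the introduction stresses) there is no uniform bound for $u/|x|$. Two facts rescue the argument. First, the conserved moment of inertia pins the patch so tightly to $B_{r_0}$ that $u$ equals $\tfrac12 y^\perp$ plus an $L^\infty$-small error \emph{everywhere}, which removes the logarithmic pessimism near the origin. Second, although $B_T$ is genuinely unbounded, its average over $\Omega_0$ is bounded \emph{uniformly in $T$}, so Chebyshev confines the bad behaviour to a set of measure $O(\delta^{1/12})$ regardless of how large $T$ is; this $T$-uniformity is precisely what lets $\delta_0$ be chosen independently of $T$ and what makes Fatou's lemma close the $\liminf$ assertion. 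Keeping every estimate in the first two steps uniform in $T$ is the structural point one must watch.
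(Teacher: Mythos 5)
Your proof is correct and reaches the stated $\delta^{1/12}$ rate, but the way you tame the singular factor $1/|\phi_x(s)|$ is genuinely different from the paper's. The paper fixes $\eps=\delta^{1/6}$, controls the time spent in each dyadic ball $B_{2^{-i}\eps}$ through the area bound $\int_{\Omega_0}G_i^T\,dx\le C(2^{-i}\eps)^2$, runs Chebyshev at \emph{every} scale $i$ with threshold $(2^{-i}\eps)^{3/2}$, and on each annulus $A_i^\eps$ replaces $u_{tan}/|\phi_x|$ by the crude bound $C\,2^{i}\eps^{-1}$, summing a geometric series to get the $CT\sqrt{\eps}$ error. You instead introduce the single functional $B_T(x)=\frac1T\int_0^T|\phi_x(s)|^{-1}\,ds$ and observe that its spatial average equals $\frac1T\int_0^T\bigl(\int_{\Omega_s}|y|^{-1}\,dy\bigr)ds\le C$ uniformly in $T$ by the local integrability of $|y|^{-1}$ in the plane, so one application of Chebyshev suffices; this is shorter, and optimizing the threshold at $\lambda=\delta^{-1/8}$ rather than $\delta^{-1/6}$ would even improve the exponent to $\delta^{1/8}$ (the dyadic route is what one would need if the relevant kernel failed to be locally integrable). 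The remaining divergences are harmless: you re-derive the Sideris--Vega $L^1$ stability from conservation of the moment of inertia and center the comparison at $B_{r_0}$, $r_0=\sqrt{|\Omega_0|/\pi}$, instead of at $D$ (the rigid rotation rate is $1/2$ either way, so the $\tfrac{1}{4\pi}$ target is unaffected), and for the $\liminf$ statement you use Fatou plus Markov where the paper takes $H=\limsup_n H_{T_n}$. The one step worth writing out explicitly is the pointwise inequality $\bigl|u_{tan}(t,y)/|y|-\tfrac12\bigr|\le \tfrac12\mathbf{1}_{\{|y|>r_0\}}+C\delta^{1/4}/|y|$, valid for all $y\ne 0$, which is exactly what lets you absorb the exterior region into $A_T$ and everything else into $B_T$.
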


%\noindent %To prove our result,
%In the proof, we  use  the $L^1$-stability result  in  Sideris-Vega \cite{sv}. Indeed, when the initial patch is disk-like,   we can show that for each fixed time moment, most of particles, which are initially placed on the initial set $\Omega_0$, should be detected in the  annulus region $\{\epsilon\leq |x|<1\}$   thanks to the incompressibility of the fluid. Here $\epsilon>0$ will be taken small depending on $\delta>0$. This gives Lemma \ref{stability_consequence}. Then we prove that most of particles spend most of their life time in the  annulus     (\textit{e.g.} see \eqref{most_time}). Since the speed induced by the exact disk patch is non-trivial within the region, we get the conclusion by  using the stability result again.  \\

%\noindent For the vortex patch problem, there are many other interesting results including persistence of boundary regularity   \cite{Che}, \cite{BC} (or see the textbook \cite{Che_book} and references therein,  also  see \cite{KRYZ} for a blow-up result in a modified SQG patch equation), existence   of rotating patches \cite{DeZa}, \cite{denisov_v_state}, \cite{Vstate}, \cite{HmMaVe}, which are so-called ``V-states'', %\cite{Kir},
% (also see the textbook \cite{saff} and references therein) 
%and stability of circular patches   \cite{wp}, \cite{Dri}, \cite{sv} (also see \cite{bd} for rectangular patches in a 2D infinite cylinder). \\
 
 {Recalling \eqref{disk_ex}, the above result states that most particles wind around the origin roughly the same number of times as particles in the case of the disc. This can be interpreted as a more refined, dynamical in nature, notion of stability; the classical stability results (Wan and Pulvirenti \cite{wp} and Sideris and Vega \cite{sv}) simply say that for initially disc-like patches, the shape of the patch remains close to the disc for all times. 
	
Moreover, control on the winding number could be useful to prove certain \textit{instability} results for vortex patches and more generally for solutions to the 2D Euler equations. The basic idea is as follows: if two fluid particles rotate around the origin with different angular velocities while roughly keeping their respective distance from the origin, any curve connecting the particles gets stretched linearly in time. This observation was used in \cite{EJSVP2} to obtain spiral formation and \cite{CJ} for perimeter growth. 
}

\subsubsection*{Notations}
We collect the notations here that are used throughout the paper. \begin{itemize}
	\item 
Given $r>0$ and $x\in\mathbb{R}^2$, we define $B_r(x) :=\{ y\in \bbR^2\, |\, |y-x|<r \}$ and $B_r:=B_r(0)$. 	
%	Given $r>0$, $B_r =\{ x\in \bbR^2\, |\, |x|<r \}$. 
	We denote the unit disc by $D := B_1$. The complement is denoted by $D^C$.
	\item Given $\eps>0$ and $i\in\bbN\cup\{0\}$, we set $B_i^\eps := B_{2^{-i}\eps}$ and $A_i^\eps := B_{i}^\eps\backslash B_{i+1}^{\eps}$. 
	\item The radial and tangential part of the velocity around the origin are defined by 
	\begin{equation}\label{defn_tan}
	u =u_{rad}\frac{x}{|x|}+u_{tan}\frac{x^\perp}{|x|}.\end{equation} Here we denote $x^\perp = (-x_2,x_1)$ for $x = (x_1,x_2)$. 
\end{itemize}

\section{Proof}
\begin{proof}[Proof of Theorem \ref{thm_infinite}]
Fix $R>0$ and let $\omega(t)={\mathbf{1}}_{\Omega_t}$ be the solution of \eqref{main_eq} with initial data $\omega_0={\mathbf{1}}_{\Omega_0} $ where $\Omega_0\subset B_{R}$. Take any 	  $\delta_0>0$ satisfying   $  \delta_0^{1/6}\leq 1/2$ and assume 
$$\delta:=|\Omega_0 \sd D |<\delta_0.$$  
If $\delta=0$, then we can take $H_T, \,H=D\setminus\{0\}$ by \eqref{disk_ex}. From now on, we assume $\delta>0$. \\

Throughout the paper, $C$ denotes a positive constant which may depend on $R>0$, may change from line to line, but it is independent of $\delta_0,\delta>0$.\\ 

\subsection{$L^1$-stability of a disk patch} \ \\
We recall the $L^1$-stability result for the circular vortex patch from \cite{sv}:
\begin{lemma}[{{\cite[Theorem 3]{sv}}}]
For any bounded open set $\Omega_0\subset \mathbb{R}^2$ and for any $r>0$, we have 
\begin{equation*}%\label{sv_result_original}
\|{\mathbf{1}}_{\Omega_t}-{\mathbf{1}}_{B_r}\|_{L^1}^2\leq4\pi\cdot \sup_{x\in\Omega_0\bigtriangleup B_r}||x|^2-r^2|\cdot
\|{\mathbf{1}}_{\Omega_0}-{\mathbf{1}}_{B_r}\|_{L^1} \quad\mbox{for any }t> 0.
\end{equation*}
\end{lemma}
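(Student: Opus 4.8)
The plan is to run the conserved angular-impulse argument behind the Sideris--Vega estimate. Fix $r>0$ and write $f(t,\cdot):={\mathbf{1}}_{\Omega_t}-{\mathbf{1}}_{B_r}$; since $f={\mathbf{1}}_{\Omega_t\setminus B_r}-{\mathbf{1}}_{B_r\setminus\Omega_t}$ we have $\|f(t)\|_{L^1}=|\Omega_t\triangle B_r|$. First I would record the pointwise identity
\[
(|x|^2-r^2)\,f(t,x)\;=\;\big||x|^2-r^2\big|\,{\mathbf{1}}_{\Omega_t\triangle B_r}(x)\;\ge\;0 ,
\]
which holds because $f=+1$ forces $|x|\ge r$ and $f=-1$ forces $|x|\le r$ (and $f=0$ off $\Omega_t\triangle B_r$). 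Consequently
\[
Q(t)\;:=\;\int_{\bbR^2}(|x|^2-r^2)\,f(t,x)\,dx\;=\;\int_{\Omega_t\triangle B_r}\big||x|^2-r^2\big|\,dx\;\ge\;0 ,
\]
and the elementary bound $Q(0)\le\sup_{x\in\Omega_0\triangle B_r}\big||x|^2-r^2\big|\cdot\|f(0)\|_{L^1}$ already produces the right-hand side of the asserted inequality.

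Next I would show $Q$ is constant in time. Since $Q(t)=\int_{\bbR^2}|x|^2\,{\mathbf{1}}_{\Omega_t}\,dx-r^2|\Omega_t|-\int_{B_r}(|x|^2-r^2)\,dx$, it suffices that the area $|\Omega_t|$ and the angular impulse $\int_{\bbR^2}|x|^2\omega(t,x)\,dx$ are invariants of \eqref{main_eq}. The former is clear from transport; for the latter, $\frac{d}{dt}\int|x|^2\omega=-\int|x|^2\,\nabla\cdot(\omega u)\,dx=2\int x\cdot u\,\omega\,dx$, and symmetrising the Biot--Savart double integral gives $\int x\cdot u\,\omega\,dx=\tfrac12\iint(x-y)\cdot K(x-y)\,\omega(x)\omega(y)\,dx\,dy=0$ because $K$ is odd and $z\cdot z^{\perp}=0$. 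At Yudovich regularity I would make this rigorous either by mollifying $\omega_0$ and passing to the limit, or by testing the equation against a smooth function equal to $|x|^2$ on a ball containing $\Omega_t$, which exists on any finite time interval by \eqref{bddspeed_gen}. Hence $Q(t)=Q(0)$.

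The remaining step, and the only one requiring actual work, is the lower bound $Q(t)\ge\tfrac1{4\pi}\|f(t)\|_{L^1}^2$. Decompose $E:=\Omega_t\triangle B_r$ into the disjoint pieces $E_{\mathrm{in}}:=B_r\setminus\Omega_t\subseteq B_r$ and $E_{\mathrm{out}}:=\Omega_t\setminus B_r\subseteq B_r^{\,c}$, so that $Q(t)=\int_{E_{\mathrm{in}}}(r^2-|x|^2)\,dx+\int_{E_{\mathrm{out}}}(|x|^2-r^2)\,dx$. By the bathtub principle, rearranging $E_{\mathrm{in}}$ to the annulus of the same measure hugging $\partial B_r$ from inside only decreases $\int_{E_{\mathrm{in}}}(r^2-|x|^2)\,dx$, and a direct computation on that annulus gives $\int_{E_{\mathrm{in}}}(r^2-|x|^2)\,dx\ge|E_{\mathrm{in}}|^2/(2\pi)$; symmetrically $\int_{E_{\mathrm{out}}}(|x|^2-r^2)\,dx\ge|E_{\mathrm{out}}|^2/(2\pi)$. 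Then $a^2+b^2\ge(a+b)^2/2$ gives $Q(t)\ge\bigl(|E_{\mathrm{in}}|+|E_{\mathrm{out}}|\bigr)^2/(4\pi)=\|f(t)\|_{L^1}^2/(4\pi)$. Chaining the three estimates, $\tfrac1{4\pi}\|{\mathbf{1}}_{\Omega_t}-{\mathbf{1}}_{B_r}\|_{L^1}^2\le Q(t)=Q(0)\le\sup_{x\in\Omega_0\triangle B_r}\big||x|^2-r^2\big|\cdot\|{\mathbf{1}}_{\Omega_0}-{\mathbf{1}}_{B_r}\|_{L^1}$, which is the lemma. The main (and rather mild) obstacle is justifying the rearrangement comparison for arbitrary measurable $E_{\mathrm{in}},E_{\mathrm{out}}$ and the time-conservation of $\int|x|^2\omega$ at low regularity; the annulus computation and the final algebra are routine.
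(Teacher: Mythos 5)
Your proof is correct, and the annulus computation, the $a^2+b^2\ge(a+b)^2/2$ step, and the constant $4\pi$ all check out. Note, however, that the paper itself gives no proof of this lemma --- it is quoted directly from Sideris--Vega \cite[Theorem 3]{sv} --- and your argument (conservation of $Q(t)=\int(|x|^2-r^2)\bigl(\mathbf{1}_{\Omega_t}-\mathbf{1}_{B_r}\bigr)\,dx$ via the angular impulse, combined with the bathtub rearrangement lower bound $Q(t)\ge\|{\mathbf{1}}_{\Omega_t}-{\mathbf{1}}_{B_r}\|_{L^1}^2/(4\pi)$) is essentially the argument of that reference, so this is a faithful reconstruction rather than a new route.
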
 

\noindent

Note that $\nrm{\mathbf{1}_A-\mathbf{1}_B}_{L^1}=\int_{A\sd B} 1dx =|A\sd B|$.  Applying the above lemma with our $\Omega_0\subset B_R$ and $\mathbf{1}_{D}$, we obtain that \begin{equation}\label{defn_C1}
\begin{split}
%\|{\mathbf{1}}_{\Omega_t}-{\mathbf{1}}_{D}\|_{L^1} =
|\Omega_t \sd D |\le C \sqrt{\delta},
%\sqrt{\|{\mathbf{1}}_{\Omega_0}-{\mathbf{1}}_{D}\|_{L^1}}, %\le C\sqrt{c_0}\eps^2,
 \quad   t>0.
\end{split}
\end{equation}% for some universal constant $C_1>0$. 
From this $L^1$ bound, we obtain that (\textit{e.g.} see \cite{Choi2019,EJSVP2}). \begin{lemma}\label{lem:vel-infty}
	Under the above assumptions on $\Omega_0$, we have with $u(t) = K* \mathbf{1}_{\Omega(t)}$ and $u_{D} = K * \mathbf{1}_D$ that \begin{equation*}%\label{est_diff}
	\begin{split}
	\nrm{u(t) - u_{D}}_{L^\infty} \le C \delta^{1/4} %c_0^{\frac{1}{4}}\eps
	\end{split}
	\end{equation*} for all $t\ge0$. %where $C_2>0$ is a universal constant.
	 In particular, we have \begin{equation}\label{est_good_tan}
	\begin{split}
	\left|\frac{u_{tan}(t,x)}{|x|} - \frac{1}{2}\right| \le 
C \frac{\delta^{1/4}}{|x|},	
%	Cc_0^{\frac{1}{4}}\frac{\eps}{|x|},
\quad x\in D\setminus\{0\}.
	\end{split}
	\end{equation}
\end{lemma}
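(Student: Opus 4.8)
The plan is to reduce everything to the $L^1$-stability bound \eqref{defn_C1} already established, namely $|\Omega_t \sd D| = \nrm{\mathbf{1}_{\Omega_t} - \mathbf{1}_D}_{L^1} \le C\sqrt{\delta}$, combined with the trivial pointwise bound $\nrm{\mathbf{1}_{\Omega_t} - \mathbf{1}_D}_{L^\infty} \le 1$. Write $f_t := \mathbf{1}_{\Omega_t} - \mathbf{1}_D$, so that $u(t) - u_D = K * f_t$. The only obstruction to bounding this directly in $L^\infty$ by $\nrm{f_t}_{L^1}$ is the local singularity of the Biot--Savart kernel: $|K(z)| = \tfrac{1}{2\pi|z|}$ is not in $L^\infty_{loc}$, but it does lie in $L^1_{loc}$ and it is bounded away from the origin.

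Accordingly, the first step is a standard near/far splitting of the kernel. Fix $x\in\bbR^2$ and a radius $\rho>0$, and write
\begin{equation*}
(u(t) - u_D)(x) = \int_{|x-y|<\rho} K(x-y) f_t(y)\, dy + \int_{|x-y|\ge\rho} K(x-y) f_t(y)\, dy.
\end{equation*}
For the near piece, bound $|f_t|\le 1$ and compute $\int_{|z|<\rho}|K(z)|\,dz = \tfrac{1}{2\pi}\int_{|z|<\rho}\tfrac{dz}{|z|} = \rho$, so that piece is $\le\rho$. For the far piece, use $|K(z)|\le\tfrac{1}{2\pi\rho}$ on $|z|\ge\rho$ together with the $L^1$ bound, giving $\le\tfrac{1}{2\pi\rho}\nrm{f_t}_{L^1}\le\tfrac{C\sqrt{\delta}}{2\pi\rho}$. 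Choosing $\rho\simeq\delta^{1/4}$ to balance the two contributions yields $\nrm{u(t)-u_D}_{L^\infty}\le C\delta^{1/4}$, uniformly in $t\ge0$ (the $t=0$ case being identical). Note that no boundedness of $\Omega_t$ is needed, since the far-field bound uses only $\sup_{|z|\ge\rho}|K(z)|$ and $\nrm{f_t}_{L^1}$; the estimate is genuinely uniform in time. The extra loss of a factor $1/2$ in the exponent relative to the $L^1$ exponent is exactly the price of the kernel splitting.

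For the "in particular" part, recall that for $|x|\le1$ the disk patch velocity is the rigid rotation $u_D(x) = \tfrac12 x^\perp$, so $(u_D)_{tan}(x) = \tfrac12|x|$ and hence $(u_D)_{tan}(x)/|x| = \tfrac12$ on $D\setminus\{0\}$. By the decomposition \eqref{defn_tan} we have $u_{tan}(t,x) - (u_D)_{tan}(x) = \big(u(t,x)-u_D(x)\big)\cdot\tfrac{x^\perp}{|x|}$, so Cauchy--Schwarz gives $|u_{tan}(t,x) - (u_D)_{tan}(x)|\le|u(t,x)-u_D(x)|\le C\delta^{1/4}$, and dividing by $|x|$ yields \eqref{est_good_tan}. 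I do not expect a serious obstacle here: the only point worth flagging is that the naive estimate $\nrm{K*f}_{L^\infty}\lesssim\nrm{f}_{L^1}$ is false and must be replaced by trading the smallness of $\nrm{f_t}_{L^1}$ against the finite mass of $|K|$ near its singularity, via the cutoff radius $\rho$; this is the routine Young-type argument referenced in \cite{Choi2019,EJSVP2}.
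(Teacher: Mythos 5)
Your proof is correct and essentially identical to the paper's: your near/far splitting of the kernel with the optimized radius $\rho\simeq\delta^{1/4}$ is precisely the standard proof of the interpolation inequality $\nrm{K*f}_{L^\infty}\le C\nrm{f}_{L^1}^{1/2}\nrm{f}_{L^\infty}^{1/2}$, which the paper simply cites (Lemma 2.1 of \cite{isg}) and applies to $f=\mathbf{1}_{\Omega_t}-\mathbf{1}_D$ together with \eqref{defn_C1}. The passage to the tangential component via $u_{D,tan}=|x|/2$ on $D\setminus\{0\}$ is the same in both arguments.
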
 
\begin{proof}
From the   estimate $|K(x)|\leq C/|x|$, we have
 \begin{equation*}%\label{est_bounded_velocity}
 \|K*f\|_{L^\infty}\leq C\|f\|^{1/2}_{L^1}\|f\|^{1/2}_{L^\infty}
 \end{equation*} for any $f\in (L^1\cap L^\infty)(\mathbb{R}^2)$  (\textit{e.g.}  see Lemma 2.1. in \cite{isg}). Thus, we can estimate, for any $x\in\mathbb{R}^2$,  
$$	|u(t,x)-u_{D}(x)|\leq 
%\nrm{u(t) - u_{D}}_{L^\infty}=
\|K*({\mathbf{1}}_{\Omega_t}-{\mathbf{1}}_{D})\|_{L^\infty}\leq C\|{\mathbf{1}}_{\Omega_t}-{\mathbf{1}}_{D}\|^{1/2}_{L^1}\|{\mathbf{1}}_{\Omega_t}-{\mathbf{1}}_{D}\|^{1/2}_{L^\infty}\leq C|\Omega_t \sd D |^{1/2}.$$ Thus, by \eqref{defn_C1}, we get the first estimate.
Taking the tangential components and using that $u_{D,tan} = |x|/2$ for $x\in D\setminus\{0\}$, we obtain  \eqref{est_good_tan}.
\end{proof}

%From now on, we shall assume that \begin{equation*}\begin{split}|\Omega_0 \sd D | \le c_0\eps^4  \end{split}\end{equation*} where $0<c_0\le 1$ is a small constant. 
Next, we 
estimate time spent near the origin for particles. A similar estimate
can be found in  \cite[Lemma 2.2]{Choi2019}. 
\begin{lemma}\label{lem:bound} Under the above assumptions on $\Omega_0$,
	for any $T>0$, we have  \begin{equation*}
	\begin{split}
	\int_{\Omega_0} \left(  \frac{1}{T}\int_0^T \mathbf{1}_{ B_r}(\phi_x(t)) dt \right) dx \le C r^2,\quad r>0
	\end{split}
	\end{equation*} and \begin{equation*}
	\begin{split}
		\int_{\Omega_0} \left(  \frac{1}{T}\int_0^T \mathbf{1}_{D^C }(\phi_x(t)) dt \right) dx \le 
		 C \sqrt{\delta }.
		%\le C \eps^2,
	\end{split}
	\end{equation*} 
\end{lemma}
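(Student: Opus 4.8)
The plan is to exchange the order of integration by Fubini--Tonelli (the integrands are nonnegative, and $(t,x)\mapsto\phi_x(t)$ is jointly continuous on $[0,T]\times\bbR^2$ by the standard ODE theory associated to the log-Lipschitz bound \eqref{loglip_gen}, so that $\mathbf{1}_{B_r}\circ\phi$ and $\mathbf{1}_{D^C}\circ\phi$ are measurable), and then to use that for each fixed $t$ the flow map $x\mapsto\phi_x(t)$ is a measure-preserving homeomorphism of $\bbR^2$ with $\phi_{\cdot}(t)(\Omega_0)=\Omega_t$. For the first estimate this gives
\begin{equation*}
\int_{\Omega_0}\left(\frac1T\int_0^T\mathbf{1}_{B_r}(\phi_x(t))\,dt\right)dx
=\frac1T\int_0^T\left|\{x\in\Omega_0:\phi_x(t)\in B_r\}\right|dt
=\frac1T\int_0^T|\Omega_t\cap B_r|\,dt,
\end{equation*}
and since $|\Omega_t\cap B_r|\le|B_r|=\pi r^2$ for every $t$, averaging over $[0,T]$ yields the claimed bound (with $C=\pi$, absorbed into the running constant).

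For the second estimate the identical manipulation gives
\begin{equation*}
\int_{\Omega_0}\left(\frac1T\int_0^T\mathbf{1}_{D^C}(\phi_x(t))\,dt\right)dx
=\frac1T\int_0^T|\Omega_t\cap D^C|\,dt
=\frac1T\int_0^T|\Omega_t\setminus D|\,dt.
\end{equation*}
Since $|\Omega_t\setminus D|\le|\Omega_t\sd D|$, the uniform-in-time $L^1$-stability bound \eqref{defn_C1}, namely $|\Omega_t\sd D|\le C\sqrt{\delta}$ for all $t>0$, gives $|\Omega_t\setminus D|\le C\sqrt{\delta}$, and averaging over $[0,T]$ preserves this.

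I do not expect a real obstacle here: once Fubini is applied, the lemma reduces to the two elementary facts $|\Omega_t\cap B_r|\le\pi r^2$ and $|\Omega_t\setminus D|\le C\sqrt{\delta}$, with no need to follow individual trajectories. The only points I would state explicitly rather than gloss over are the two structural properties of the Yudovich flow used above: that $x\mapsto\phi_x(t)$ is volume preserving (because $u$ is divergence-free) and that transport of $\mathbf{1}_{\Omega_0}$ along this flow produces exactly $\mathbf{1}_{\Omega_t}$, i.e.\ $\Omega_t=\phi_{\cdot}(t)(\Omega_0)$, both of which are recalled in the introduction. (If one wished to be fully self-contained about the joint measurability needed for Fubini, one could approximate $\mathbf{1}_{B_r}$ and $\mathbf{1}_{D^C}$ from below by continuous functions and use separate continuity in $t$ and in $x$; this is routine.)
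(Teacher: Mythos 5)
Your proof is correct and follows essentially the same route as the paper: apply Fubini--Tonelli, use that $x\mapsto\phi_x(t)$ is area-preserving to rewrite the spatial integral as $|\Omega_t\cap B_r|\le \pi r^2$ (respectively $|\Omega_t\cap D^C|\le|\Omega_t\sd D|\le C\sqrt{\delta}$ via \eqref{defn_C1}), and average in time. No gaps.
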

\begin{proof}
	We compute, using that the flow map $x\mapsto \phi_x(t)$ is area-preserving, for $r>0$ and $t>0$, \begin{equation*}
	\begin{split}
	\int_{\Omega_0} \mathbf{1}_{ B_{r} }(\phi_x(t))dx & = \int_{ \Omega_t \cap B_r} 1\,dx \le |B_r|=\pi r^2, 
	\end{split}
	\end{equation*} and,  by \eqref{defn_C1}, \begin{equation*}
	\begin{split}
		\int_{\Omega_0} \mathbf{1}_{D^C }(\phi_x(t))dx & = \int_{ \Omega_t \cap D^C} 1\,dx \le  \int_{ \Omega_t \sd D} 1\,dx = |\Omega_t \sd D |\leq C\sqrt{\delta} . 
	\end{split}
	\end{equation*} Now the desired bounds follow from integrating in time and applying Fubini's theorem. 
\end{proof}

\subsection{Set of particles touching the origin} \ \\
We prove the following result:
\begin{lemma}\label{lem:measure-zero}
	Let $\phi_x(t)$ be the flow map associated with a Yudovich solution. Then, we have {$$| \{ x \in \bbR^2 \, | \,  \phi_x(t) = 0 \mbox{ for some } t \ge 0  \} |= 0.$$}
\end{lemma}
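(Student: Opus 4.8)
\textbf{Proof plan for Lemma \ref{lem:measure-zero}.}

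The plan is to run the argument backwards in time: fix $T>0$ and show that the set $E_T := \{ x \in \bbR^2 \mid \phi_x(t) = 0 \text{ for some } t \in [0,T] \}$ has measure zero; since $\{x \mid \phi_x(t)=0 \text{ for some } t\ge 0\} = \bigcup_{T\in\bbN} E_T$ is a countable union, this suffices. The key observation is that if $\phi_x(t_0) = 0$ for some $t_0 \in [0,T]$, then $x = \phi_{0}^{-1,\,t_0}$, i.e. $x$ lies on the backward trajectory emanating from the origin. Because the velocity field vanishes at the origin is \emph{not} assumed here, I cannot claim the origin is a fixed point; instead I would use the flow estimate \eqref{eq:flow-estimate} together with the bounded-speed bound \eqref{bddspeed_gen}. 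Specifically, fix a time $t_0$ and let $y_{t_0}$ be the point with $\phi_{y_{t_0}}(t_0) = 0$; then for any $x$, \eqref{eq:flow-estimate} applied on $[0,t_0]$ (running from time $t_0$ back to time $0$, or equivalently using the lower bound with the roles of the endpoints swapped) gives $|x - y_{t_0}| \le C |\phi_x(t_0) - 0|^{\exp(-C\nrm{\omega_0}t_0)}$, so every point whose trajectory is within $\rho$ of the origin at time $t_0$ lies within $C\rho^{\beta}$ of $y_{t_0}$, where $\beta = \exp(-CR_\ast T) > 0$ with $R_\ast$ a bound on $\nrm{\omega_0}_{L^1\cap L^\infty}$ (here $\nrm{\omega_0}_{L^1} \le |B_R| = \pi R^2$ and $\nrm{\omega_0}_{L^\infty}=1$).

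With this in hand, the strategy is a covering argument. Partition $[0,T]$ into $N$ subintervals $I_j = [t_{j-1}, t_j]$ of length $T/N$. If $\phi_x(t_0) = 0$ for some $t_0 \in I_j$, then by bounded speed \eqref{bddspeed_gen} we have $|\phi_x(t_{j-1})| = |\phi_x(t_{j-1}) - \phi_x(t_0)| \le C(T/N)$. Hence, setting $\rho_N := C(T/N)$, the point $x$ satisfies $|x - y_{t_{j-1}}| \le C\rho_N^{\beta}$ by the flow estimate of the previous paragraph, where $y_{t_{j-1}}$ is \emph{any} fixed point whose trajectory passes through $B_{\rho_N}$ at time $t_{j-1}$ — note that all such points cluster in a single ball of radius $C\rho_N^\beta$ centered at one representative, precisely because \eqref{eq:flow-estimate} forces two points with nearby images at time $t_{j-1}$ to be nearby at time $0$. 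Therefore
\begin{equation*}
E_T \subseteq \bigcup_{j=1}^{N} \{ x \mid |x - y_{t_{j-1}}| \le C \rho_N^{\beta} \},
\end{equation*}
which is a union of $N$ balls each of measure $\le C \rho_N^{2\beta} = C (T/N)^{2\beta}$. Thus $|E_T| \le C N \cdot (T/N)^{2\beta} = C T^{2\beta} N^{1 - 2\beta}$. This bound is useless unless $2\beta > 1$, which fails for large $T$; so the naive version of the covering does not close.

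To fix this, I would iterate the flow estimate scale-by-scale rather than using it in one shot, exploiting that \eqref{eq:flow-estimate} holds on \emph{every} subinterval with a uniform exponent. The cleaner route: for $x \in E_T$ with $\phi_x(t_0)=0$, $t_0 \in I_j$, we get $|\phi_x(t_{j-1})| \le C\rho_N$ as above; then applying \eqref{eq:flow-estimate} on $[t_{j-2}, t_{j-1}]$ gives $|\phi_x(t_{j-2}) - \phi_x(t_{j-1})|$ controlled, but more usefully we can go backward one short step at a time, and on each step of length $T/N$ the Hölder exponent is $\exp(-CR_\ast T/N)$, which tends to $1$ as $N\to\infty$. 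Concatenating $j \le N$ such steps, the composite exponent is $\exp(-CR_\ast j T/N) \ge \exp(-CR_\ast T) = \beta$, so we return to the same $\beta$ — the exponent genuinely degrades over a long time and there is no way around it via this estimate alone. The honest resolution, and the step I expect to be the main obstacle, is this: one must instead send $N\to\infty$ \emph{before} worrying about the exponent, by noting that for fixed $N$ the set $E_T$ is contained in $N$ balls of radius $C(T/N)^\beta$, and then observe that this shows $E_T$ has \emph{Hausdorff dimension} $\le 1/\beta \cdot \ldots$ — no. The correct and simplest argument avoids \eqref{eq:flow-estimate} entirely on the large scale: since the forward flow $x \mapsto \phi_x(t_0)$ is measure-preserving, the set $\{x : |\phi_x(t_0)| < \rho\}$ has measure exactly $\pi\rho^2$ for each fixed $t_0$; the issue is the union over $t_0 \in [0,T]$. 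Covering $[0,T]$ by $N$ points $t_0^{(1)},\dots,t_0^{(N)}$ spaced $T/N$ apart and using bounded speed, $E_T \subseteq \bigcup_i \{x : |\phi_x(t_0^{(i)})| < C T/N\}$, so $|E_T| \le N \cdot \pi (CT/N)^2 = \pi C^2 T^2 / N \to 0$ as $N \to \infty$. Hence $|E_T| = 0$, and the flow estimate \eqref{eq:flow-estimate} is not even needed — only \eqref{bddspeed_gen} and area preservation. Taking the union over $T \in \bbN$ completes the proof. \qedhere
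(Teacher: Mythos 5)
Your final argument is correct, and it takes a genuinely different --- and more elementary --- route than the paper. After (rightly) discarding the spatial H\"older covering, whose exponent in \eqref{eq:flow-estimate} degrades below $1/2$ for large times, you settle on: grid $[0,T]$ by points $t_0^{(1)},\dots,t_0^{(N)}$ spaced $T/N$ apart, use the bounded-speed estimate \eqref{bddspeed_gen} to place every $x\in E_T$ in some set $\{x\,:\,|\phi_x(t_0^{(i)})|\le CT/N\}$, and use that $x\mapsto\phi_x(t_0^{(i)})$ is measure preserving so that each such preimage has measure exactly $\pi(CT/N)^2$; summing gives $|E_T|\le \pi C^2T^2/N\to 0$. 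This needs only incompressibility and the uniform $L^\infty$ bound on $u$, works on the whole interval $[0,T]$ in one shot, and requires no regularity of the flow map whatsoever. The paper instead proves that the \emph{inverse} flow map $t\mapsto A(t,0)$ is H\"older continuous \emph{in time} with exponent $\exp(-Ct\nrm{\omega_0}_{L^1\cap L^\infty})$, restricts to a short window $[0,T_0]$ (with $T_0$ depending only on $\nrm{\omega_0}_{L^1\cap L^\infty}$) so that the exponent $\alpha$ exceeds $1/2$, covers the backward trajectory of the origin by $n$ balls of radius $C(T_0/n)^{\alpha}$ to get measure $\le Cn^{1-2\alpha}\to 0$, and then iterates over consecutive windows. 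The paper's route yields extra geometric information (the points hitting the origin in one window form the image under a measure-preserving map of a set of box-counting dimension at most $1/\alpha<2$), whereas yours proves only the stated null-measure conclusion --- but does so more simply and under weaker hypotheses. Two cosmetic remarks: the first two-thirds of your write-up records dead ends and should be deleted, keeping only the final covering argument; and measurability of $E_T$ is not an issue, since your covering places $E_T$ inside a null set and Lebesgue measure is complete.
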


This result is not trivial since for Yudovich solutions (in particular vortex patches) since the flow map is in general only H\"older continuous with H\"older exponent decaying to zero as $t\to\infty$. On the other hand, there are space-filling curves in the plane which is actually H\"older continuous with exponent $1/2$ (Hilbert's curve).

\begin{proof}[Proof of Lemma \ref{lem:measure-zero}]
	First step is to prove that the inverse of the flow map is H\"older continuous \textit{with respect to time}. For any given $t\ge0$, we consider $\phi(t,x)=\phi_{x}(t)$ as a bijection $\bbR^2\rightarrow\bbR^2$, and denote the inverse map by $A(t,\cdot)$; that is, $A(t,\phi(t,x))=x$. For this purpose, take some $t, t' \ge 0$ and observe that \begin{equation*}
		\begin{split}
			\phi(t,x)-\phi(t',x) = \int_{t'}^{t} u(\tau,\phi(\tau,x)) \, d\tau 
		\end{split}
	\end{equation*} gives \begin{equation*}
	\begin{split}
		\frac{|\phi(t,x)-\phi(t',x)|  }{|t-t'|} \le C \nrm{ u }_{L^\infty_{t,x}} \le C \nrm{\omega_0}_{L^1\cap L^\infty}. 
	\end{split}
\end{equation*}
%Let $T_0>0$ which will be chosen small enough later.
 Then, for  $0\leq t'\leq t\leq 1$ and for $x\in\mathbb{R}^2$, we first take $a, a'$ such that $\phi(t',a')= \phi(t,a)=x$, so that \begin{equation*}
\begin{split}
	A(t,x)-A(t',x) = a - a'. 
\end{split}
\end{equation*} We have \begin{equation*}
\begin{split}
	\phi(t',a') - \phi(t,a') = \phi(t,a) - \phi(t,a'),
\end{split}
\end{equation*} and the left hand side is bounded in absolute value by $C\nrm{\omega_0}_{L^1\cap L^\infty}|t-t'|$. Using this together with the lower bound given in \eqref{eq:flow-estimate}, we deduce that 
there exist absolute constants $c, C>0$ such that
\begin{equation*}
\begin{split}
	c|a-a'|^{ \exp( Ct\nrm{\omega_0}_{L^1\cap L^\infty} ) } \le C\nrm{\omega_0}_{L^1\cap L^\infty}|t-t'|.
\end{split}
\end{equation*} %where $T=\max\{ t,t' \}$.
 Hence we obtain \begin{equation}\label{eq:Holder-time}
\begin{split}
	|A(t,x)-A(t',x)| \le C( \nrm{\omega_0}_{L^1\cap L^\infty}|t-t'|)^{ \exp(- Ct\nrm{\omega_0}_{L^1\cap L^\infty} ) }, \quad x\in\mathbb{R}^2,\quad 0\leq t'\leq t\leq 1.
\end{split}
\end{equation} %where $C>0$ is an absolute constant. 

Now we fix any $\alpha\in(1/2, 1)$. Given the estimate \eqref{eq:Holder-time}, we may find constants $T_0\in(0,1)$ and $C_0>0$ depending only on $\nrm{\omega_0}_{L^1\cap L^\infty}$ such that \begin{equation}\label{eq:Holder-time2}
	\begin{split}
		|A(t,x)-A(t',x)| \le C_0|t-t'|^{\alp}, \quad x\in\mathbb{R}^2,\quad 0\leq t'\leq t\leq T_0.
	\end{split}
\end{equation}  We can now prove that {$$| \{ x \in \bbR^2 \, | \,  \phi_x(t) = 0 \mbox{ for some } t \in [0,T_0)  \} |= 0.$$} To see this, observe that the set of points $x$ satisfying $\phi_x(t)=0$ for some $t \in [0,T_0)$ is simply given by $\{ A(t,0) \, |\, t \in [0,T_0) \}=:\calA([0,T_0))$. Fix any integer $n\ge 1$ and consider the points $x_{k} := A(\frac{kT_0}{n},0)$ for $k=0,1,\dots,(n-1)$. From \eqref{eq:Holder-time2}, we see that \begin{equation*}
\begin{split}
	\calA([0,T_0)) \subset \bigcup_{ k = 0}^{n-1} B_{C_0 (\frac{T_0}{n})^\alp }(x_{k}),
\end{split}
\end{equation*}  which implies that \begin{equation*}
\begin{split}
	|\calA([0,T_0))| \le Cn^{1-2\alp} T_0^{2\alp}.
\end{split}
\end{equation*} In the last estimate, $C, T_0$ are independent of $n\geq 1$, and therefore we conclude that $|\calA([0,T_0))| =0$ thanks to  $\alpha>1/2$. 

\medskip

We now write $\phi(t;s,a)$ for the location of the particle at time $t$ which was $a$ at time $s$. For each integer $m\ge 0$, we consider the set \begin{equation*}
	\begin{split}
		U_{m} = \{ \phi(mT_0; s, 0) \,|\, s \in [mT_0, (m+1)T_0) \}. 
	\end{split}
\end{equation*} Note that $\calA([0,T_0))=U_{0}$. Applying the same argument (with initial data $\omg_0$ replaced with $\omega(t=mT_0)$), we deduce that $|U_{m}|=0$. This is possible only because $T_0$ depends only on $\nrm{\omg}_{L^1\cap L^\infty}$, which is independent of time. Next, since $A(t,\cdot)$ is measure preserving for any $t\ge0$, we see that $|A(mT_0,U_{m})|=0$. The set $A(mT_0,U_{m})$ is precisely the collection of points which touches the origin at some time $[mT_0,(m+1)T_0)$. This
finishes the proof. \end{proof}

\subsection{Estimate of winding number} \ \\
%Let  $p\in(0,1/4).$ Take  any $q\in(p,1/4)$.
	Denote
	$$\epsilon:=\delta^{1/6}>0.$$% for some small $c_0>0$ which will be determined later. 
	Observe   $ \epsilon=\delta^{1/6}<\delta_0^{1/6}\leq 1/2$.
%Need $\epsilon\leq 1/2$.\ \\		
	Fix   $T>0$ and define \begin{equation*}
	\begin{split}
	G_i^T(x) := \frac{1}{T} \int_0^T \mathbf{1}_{ B_i^{\eps} }(\phi_x(t)) dt, \quad i\ge 0
	\end{split}
	\end{equation*} (recall the notation $B_i^\eps := B_{2^{-i}\eps}$) and \begin{equation*}
	\begin{split}
	G_{-1}^T(x) := \frac{1}{T} \int_0^T \mathbf{1}_{ D^C }(\phi_x(t)) dt. 
	\end{split}
	\end{equation*} Lemma \ref{lem:bound} states that %with some uniform $C>0$, 
	\begin{equation*}
	\begin{split}
	\int_{\Omega_0} G^T_i(x)dx \le C(2^{-i}\eps)^2,\quad   i\ge0,
	\end{split}
	\end{equation*} and, from Chebyshev's inequality, we obtain \begin{equation*}
	\begin{split}
	\left| \{ x \in \Omega_0\, | \,G^T_i(x) \ge{ (2^{-i}\eps)^{\frac{3}{2}} } \} \right|\le C{(2^{-i}\eps)^{\frac{1}{2}}}
	= C {2^{-i/2}\delta^{1/12}}, \quad   i\ge0. 
	\end{split}
	\end{equation*} Similarly,
we have %, for any $\eta>0$,
	  \begin{equation*}
	\begin{split}
	\left| \{ x \in \Omega_0 \,|\, G^T_{-1}(x) \ge 2 \eps  \} \right|\le   C \frac{\sqrt{\delta }}{ \eps}=C\delta^{1/3}\leq C{\delta^{1/12}}.
	\end{split}
	\end{equation*}
	
	Now we define \begin{equation*}
	\begin{split}
	{ H_T :=\{ x \in \Omega_0\, |\, G^T_i(x) < (2^{-i}\eps)^{\frac{3}{2}}, \,\forall i\ge 0 \mbox{ and } G_{-1}^T(x)<2\eps \mbox{ and }    \phi_x(t) \ne 0,\, \forall t\in[0,T] \}. }
	\end{split}
	\end{equation*} %If $\eps>0$ is small enough we have clearly 
	{Recalling Lemma \ref{lem:measure-zero},}  we have 	\begin{equation*}
	\begin{split}
	|H_T|\ge |\Omega_0|- C{\delta^{1/12}}. 
	\end{split}
	\end{equation*} For any  fixed $x\in H_T$, we split the time integral as follows: \begin{equation*}
	\begin{split}
	\int_0^T \frac{u_{tan}(t,\phi_x(t))}{|\phi_x(t)|} dt & = \int_{[0,T]\backslash (\cup_{i\ge-1}I_i) } \frac{u_{tan}(t,\phi_x(t))}{|\phi_x(t)|} dt+ \sum_{i \ge -1} \int_{I_i} \frac{u_{tan}(t,\phi_x(t))}{|\phi_x(t)|} dt,
	\end{split}
	\end{equation*} where \begin{equation*}
	\begin{split}
	I_i := \{ t \in [0,T]\, |\, \phi_x(t) \in A_i^\eps \},\quad i\ge 0
	\end{split}
	\end{equation*}(recall the notation $ A_i^\eps := B_i^\eps\backslash B_{i+1}^\eps$)  and \begin{equation*}
	\begin{split}
	I_{-1} := \{ t \in [0,T] \,|\, \phi_x(t)\in D^c \}.
	\end{split}
	\end{equation*} For   $i\ge 0 $, we have that $|I_i| \le TG_i^T(x)\leq {T(2^{-i}\eps)^{\frac{3}{2}}  }$ and {$|I_{-1}|\le CT\eps $}. Thus we get
$$|(\cup_{i\ge-1}I_i)|\leq CT\eps.$$

	For $i\ge0$, we also estimate, {  using the uniform bound $\nrm{u(t)}_{L^\infty} \le C(1+|\Omega_0|)\leq C$ which follows from \eqref{bddspeed_gen}, }\begin{equation*}
	\begin{split}
	\left| \int_{I_i} \frac{u_{tan}(t,\phi_x(t))}{|\phi_x(t)|} dt \right|
	\le  {C\int_{I_i} \frac{1}{|\phi_x(t)|} dt 
	 \le C |I_i| 2^{i} \eps^{-1} \le CT \eps^{1/2} 2^{-i/2}.}
	\end{split}
	\end{equation*}  In the case $i = -1$, we simply use $|\phi_x(t)|\geq 1$ together with $\nrm{u(t)}_{L^\infty} \le C$ to obtain  \begin{equation*}
	\begin{split}
	\left| \int_{I_{-1}} \frac{u_{tan}(t,\phi_x(t))}{|\phi_x(t)|} dt \right| \le C|I_{-1}| \le C T\eps.  
	\end{split}
	\end{equation*} Hence  we obtain \begin{equation*}
	\begin{split}
	\left| \sum_{i \ge -1} \int_{I_i} \frac{u_{tan}(t,\phi_x(t))}{|\phi_x(t)|} dt  \right| 
\le C T {\sqrt{\eps}}. 
	\end{split}
	\end{equation*} Note that the above constant $C>0$ is independent of $x\in H_T$. \\

	Lastly, for $t\in [0,T]\backslash (\cup_{i\ge-1}I_i) $ we have that \begin{equation*}
	\begin{split}
	\eps\le |\phi_x(t)| <1,
	\end{split}
	\end{equation*} from which it follows by 
\eqref{est_good_tan}	
	 that \begin{equation*}
	\begin{split}
	\left| \frac{u_{tan}(t,\phi_x(t))}{|\phi_x(t)|} - \frac{1}{2} \right| \le 
C \frac{\delta^{1/4}}{|\phi_x(t)|}\leq 
C\frac{\delta^{1/4}}{\eps}\leq C\delta^{1/12}.
	\end{split}
	\end{equation*} 
 	Therefore, we obtain the lower bound \begin{equation*}
	\begin{split}
	\int_{[0,T]\backslash (\cup_{i\ge-1}I_i) } \frac{u_{tan}(t,\phi_x(t))}{|\phi_x(t)|} dt \ge T(1-C\eps)\left(\frac{1}{2}-C\delta^{1/12}\right). 
	\end{split}
	\end{equation*} %We may restrict to $c_0$ small that $\delta(c_0)<\frac{1}{4}$. This gives, c
	Collecting all the estimates, we obtain\begin{equation*}
	\begin{split}
	\frac{1}{T}\int_0^T \frac{u_{tan}(t,\phi_x(t))}{|\phi_x(t)|} dt &\ge 
 (1-C\eps)\left(\frac{1}{2}-C\delta^{1/12}\right) - C{\sqrt{\eps}}
 %\\&\ge \frac{1}{2}-C\delta^{1/8}\left(1+ \delta^{1/8}+\log\frac{1}{\delta}\right)
\ge \frac{1}{2}-C{\delta^{1/12}}.
	\end{split}
	\end{equation*}  {This gives the lower bound \begin{equation*}
	\begin{split}
	\frac{N_x(T)}{T}  \ge  \frac{1}{4\pi}- C{\delta^{1/12}},
	\end{split}
	\end{equation*} while the upper bound can be obtained similarly.} The proof for the first statement is now complete. To see the second statement, we just observe that with $T_n:=n$ \begin{equation*}
	\begin{split}
	H:=\limsup_{n} H_{T_n} = \bigcap_{m\ge 1} \bigcup_{n\ge m} H_{T_n} 
	\end{split}
	\end{equation*}  has measure greater than $|\Omega_0|-C{\delta^{1/12}}$. \end{proof}

\begin{remark}
	Since Yudovich's log-Lipschitz estimate in principle allows for arbitrarily fast rotation, it seems like an interesting problem to ask whether there is an initial data $\omega_0\in L^1\cap L^\infty$ which has a trajectory winding around the origin arbitrarily many times in a fixed time interval. This is related to the (difficult) question of whether instantaneous spiraling is possible for vortex patches. We refer the interested readers to discussions in \cite[Sec. 5--6]{EJSVP1}, \cite{EG, Pull3}. 
\end{remark}

{
\begin{remark} In a recent related work \cite{Choi2019}, the travel distance for fluid particles $$d_x(t):= \int_0^t |u(s,\phi_x(s))|ds$$ was considered, and it was established that for most particles on the initial vortex patch, this quantity grows linearly in time when the initial patch is  disk-like. 
\end{remark}}

\begin{remark}
	It is also interesting to ask a similar question about winding number (or travel distance) on   well-known solutions other than a disk, whose certain stability has been known. It might include   Kirchhoff's ellipse \cite{tang}, Lamb dipoles \cite{AC2019}, {Hill's vortex \cite{Choi2020}}, shear flows \cite{BesMas} and rectangles in a strip \cite{bd},   and so on.
\end{remark}
 
% \vspace{2cm}
{\Large \section*{Acknowledgement}}

\noindent KC has been supported by   the National Research Foundation of Korea (NRF-2018R1D1A1B07043065) and by the Research Fund (1.200085.01) of UNIST(Ulsan National Institute of Science \& Technology).
IJ has been supported by the Science Fellowship of POSCO TJ Park Foundation and the National Research Foundation of Korea grant 2019R1F1A1058486. {IJ thanks Sun-Chul Kim for telling about the article \cite{MR}.}

\ \\ 

%\bibliography{Winding-Euler-submit}

\end{document}